\newtheorem{theorem}{Theorem}[section]
\newtheorem{definition}{Definition}[section]
\newtheorem{example}{Example}[section]
\newtheorem{lemma}[theorem]{Lemma}
\newtheorem{problem}{Problem}
\newtheorem{proposition}{Proposition}
\newtheorem{remark}{Remark}[section]
\begin{document}

\title{Structural Analysis of Laplacian Spectral Properties 
of Large-Scale Networks}
\author{Victor M. Preciado,~\IEEEmembership{Member,~IEEE,} Ali Jadbabaie,~%
\IEEEmembership{Senior~Member,~IEEE}, and George C. Verghese, %
\IEEEmembership{Fellow,~IEEE} \thanks{%
V.M. Preciado and A. Jadbabaie are with the Department of Electrical and
Systems Engineering at the University of Pennsylvania, Philadelphia, PA
19104 USA. (e-mail: preciado@seas.upenn.edu; jadbabai@seas.upenn.edu). } 
\thanks{%
G.C. Verghese is with the Department of Electrical Engineering and Computer
Science at the Massachusetts Institute of Technology, Cambridge, MA $02139$
USA. (e-mail: verghese@mit.edu). } \thanks{%
This work was supported by ONR MURI \textquotedblleft Next Generation
Network Science\textquotedblright\ and AFOSR \textquotedblleft Topological
and Geometric Tools for Analysis of Complex Networks\textquotedblright .}}
\maketitle

\begin{abstract}
Using methods from algebraic graph theory and convex optimization,
we study the relationship between local structural features of a network and
spectral properties of its Laplacian matrix. In particular, we derive
expressions for the so-called spectral moments of the Laplacian matrix of a
network in terms of a collection of local structural measurements. Furthermore, we propose a
series of semidefinite programs to compute bounds on the spectral radius and
the spectral gap of the Laplacian matrix from a truncated sequence of
Laplacian spectral moments. Our analysis shows that the Laplacian spectral
moments and spectral radius are strongly constrained by local structural
features of the network. On the other hand, we illustrate how local structural features are usually
not enough to estimate the Laplacian spectral gap.
\end{abstract}

\section{Introduction}

Understanding the relationship between the structure of a network
and the behavior of dynamical processes taking place in it is a central
question in the research field of network science \cite{NBW06}. Since the
behavior of many networked dynamical processes is closely related with the
Laplacian eigenvalues (see \cite{FM04}, \cite{OFM07} and references
therein), it is of interest to study the relationship between structural
features of the network and its Laplacian eigenvalues.

In this technical note, we study this relationship, focusing on the
role played by structural features that can be extracted from localized
samples of the network structure. Our objective is then to efficiently
aggregate these local samples of the network structure to infer global
properties of the Laplacian spectrum. We propose a graph-theoretical
approach to relate structural features of a network with algebraic
properties of its Laplacian matrix. Our analysis reveals that there are
certain spectral properties, such as the so-called spectral moments, that
can be efficiently computed from these structural features. Furthermore,
applying a recent result by Lasserre \cite{Las11}, we propose a series of
semidefinite programs to compute bounds on the Laplacian spectral radius and
spectral gap from a truncated sequence of spectral moments.

The paper is organized as follows. In the next subsection, we define
terminology needed in our derivations. In Section \ref{Laplacian Spectral
Analysis}, we introduce a graph-theoretical methodology to derive
closed-form expressions for the so-called Laplacian spectral moments in
terms of structural features of the network. In Section \ref{Optimal Bounds}, we use semidefinite programming to derive optimal bounds on the Laplacian
spectral radius and spectral gap from a truncated sequence of spectral
moments. We validate our results numerically in Section \ref{Simulations}.

\subsection{\label{Notation}Notations \& Preliminaries}

Let $\mathcal{G}=\left( \mathcal{V},\mathcal{E}\right) $ be an undirected
graph, where $\mathcal{V}=\left\{ v_{1},\dots ,v_{n}\right\} $ denotes a set
of $n$ nodes and $\mathcal{E}\subseteq \mathcal{V}\times \mathcal{V}$
denotes a set of $e$ undirected edges. If $\left\{ v_{i},v_{j}\right\} \in 
\mathcal{E}$, we call nodes $v_{i}$ and $v_{j}$ \emph{adjacent} (or
first-neighbors), which we denote by $v_{i}\sim v_{j}$. We define the set of
first-neighbors of a node $v_{i}$ as $\mathcal{N}_{i}=\{w\in \mathcal{V}%
:\left\{ v_{i},w\right\} \in \mathcal{E}\}.$ The \emph{degree} $d_{i}$ of a
vertex $v_{i}$ is the number of nodes adjacent to it, i.e., $%
d_{i}=\left\vert \mathcal{N}_{i}\right\vert $. We consider three types of
undirected graphs: (\emph{i}) A graph is called \emph{simple} if its edges
are unweighted and it has no self-loops\footnote{%
A self-loop is an edge of the type $\left\{ v_{i},v_{i}\right\} $.}, (\emph{%
ii}) a graph is \emph{loopy} if it has self-loops, and (\emph{iii}) a graph
is \emph{weighted} if there is a real number associated with every edge in
the graph. More formally, a weighted graph $\mathcal{H}$ can be defined as
the triad $\mathcal{H=}\left( \mathcal{V},\mathcal{E},\mathcal{W}\right) $,
where $\mathcal{V}$ and $\mathcal{E}$ are the sets of nodes and edges in $%
\mathcal{H}$, and $\mathcal{W=}\left\{ w_{ij}\in \mathbb{R},\text{ for all }%
\left\{ v_{i},v_{j}\right\} \in \mathcal{E}\right\} $ is the set of
(possibly negative) weights.

The \emph{adjacency matrix} of a simple graph $\mathcal{G}$, denoted by $A_{%
\mathcal{G}}=[a_{ij}]$, is an $n\times n$ symmetric matrix defined
entry-wise as $a_{ij}=1$ if nodes $v_{i}$ and $v_{j}$ are adjacent, and $%
a_{ij}=0$ otherwise. In the case of weighted graphs (and possibly
non-simple), the weighted adjacency matrix is defined by $W_{\mathcal{G}}=%
\left[ w_{ij}\right] $, where $w_{ij}=0$ if $v_{i}$ is not adjacent to $%
v_{j} $. We define the \emph{degree matrix} of a simple graph $\mathcal{G}$
as the diagonal matrix $D_{\mathcal{G}}=diag\left( d_{i}\right) $. We define
the \emph{Laplacian matrix }$L_{\mathcal{G}}$ (also known as combinatorial
Laplacian, or Kirchhoff matrix) of a simple graph as $L_{\mathcal{G}}=D_{%
\mathcal{G}}-A_{\mathcal{G}}$. For simple graphs, $L_{\mathcal{G}}$ is a
symmetric, positive semidefinite matrix, which we denote by $L_{\mathcal{G}%
}\succeq 0$ \cite{Big93}. Thus, $L_{\mathcal{G}}$ has a full set of $n$ real
and orthogonal eigenvectors with real nonnegative eigenvalues $0=\lambda
_{1}\leq \lambda _{2}\leq ...\leq \lambda _{n}$. The second smallest and
largest eigenvalues of $L_{\mathcal{G}}$, $\lambda _{2}$ and $\lambda _{n}$,
are called the spectral gap and spectral radius of $L_{\mathcal{G}}$,
respectively. Given a $n\times n$ real and symmetric matrix $B$ with (real)
eigenvalues $\sigma _{1},...,\sigma _{n}$, we define the $k$-th spectral
moment of $B$ as $m_{k}\left( B\right) \triangleq \frac{1}{n}%
\sum_{i=1}^{n}\sigma _{i}^{k}.$ As we shall show in Section \ref{Laplacian
Spectral Analysis}, there is an interesting connection between the spectral
moments of the Laplacian matrix, $m_{k}\left( L_{\mathcal{G}}\right) $, and
structural features of the network.

We now define a collection of structural properties that are important in
our derivations. The \emph{degree sequence} of a simple graph $\mathcal{G}$
is the ordered list of its degrees, $\left( d_{1},...,d_{n}\right) $. A 
\emph{walk} of length $k$ from $v_{i_{1}}$ to $v_{i_{k+1}}$ is an ordered
sequence of nodes $\left( v_{i_{1}},v_{i_{2}},...,v_{i_{k+1}}\right) $ such
that $v_{i_{j}}\sim v_{i_{j+1}}$ for $j=1,2,...,k$. One says that the walk 
\emph{touches} each of the nodes that comprises it. If $%
v_{i_{1}}=v_{i_{k+1}} $, then the walk is closed. A closed walk with no
repeated nodes (with the exception of the first and last nodes) is called a 
\emph{cycle}. Given a walk $p=\left(
v_{i_{1}},v_{i_{2}},...,v_{i_{k+1}}\right) $ in a weighted graph $\mathcal{H}
$, we define the weight of the walk as, $\omega \left( p\right)
=w_{i_{1}i_{2}}w_{i_{2}i_{3}}...w_{i_{k}i_{k+1}}$.

\section{\label{Laplacian Spectral Analysis}Moment-Based Analysis of the
Laplacian Matrix}

In this paper, we use algebraic graph theory to study the
relationship between structural properties of a network and its Laplacian
spectrum based on the spectral moments. A well-known result in algebraic
graph theory relates the diagonal entries of the $k$-th power of
the adjacency matrix, $\left[ A_{\mathcal{G}}^{k}\right] _{ii}$, to
the number of closed walks of length $k$ in $G$ that
start and finish at node $v_{i}$ \cite{Big93}. Using this result,
it is possible to relate algebraic properties of the adjacency matrix $A_{%
\mathcal{G}}$ to the presence of certain subgraphs in the network 
\cite{PJ10}. We can generalize this result to weighted graphs as follows:

\medskip

\begin{proposition}
\label{Moments from Walks}Let $\mathcal{H=}\left( \mathcal{V},\mathcal{E},%
\mathcal{W}\right) $ be a weighted graph with weighted adjacency matrix $W_{%
\mathcal{H}}=\left[ w_{ij}\right] $. Then, $\left[ W_{\mathcal{H}}^{k}\right]
_{ii}=\sum_{p\in P_{k,i}}\omega \left( p\right) ,$where $P_{k,i}$ is the set
of closed walks of length $k$ from $v_{i}$ to itself in $\mathcal{H}$.
\end{proposition}

\medskip

\begin{proof}
By recursively applying the multiplication rule for matrices, we have the
following expansion%
\begin{equation}
\left[ W_{\mathcal{H}}^{k}\right] _{ii}=\sum_{i=1}^{n}\sum_{i_{2}=1}^{n}%
\cdots \sum_{i_{k}=1}^{n}w_{i,i_{2}}w_{i_{2,}i_{3}}\cdots ~w_{i_{k},i}.
\label{Multiple Summations}
\end{equation}%
Using the graph-theoretic nomenclature introduced in Section \ref{Notation},
we have that $w_{i,i_{2}}w_{i_{2,}i_{3}}...w_{i_{k},i}=\omega \left(
p\right) $, for $p=\left(
v_{i},v_{i_{2}},v_{i_{3}},...,v_{i_{k}},v_{i}\right) $. Hence, the
summations in (\ref{Multiple Summations}) can be written as $\left[ W_{%
\mathcal{H}}^{k}\right] _{ii}=\sum_{1\leq i,i_{2},...,i_{k}\leq n}\omega
\left( p\right) $. Finally, the set of closed walks $p=\left(
v_{i},v_{i_{2}},v_{i_{3}},...,v_{i_{k}},v_{i}\right) $\ with indices $1\leq
i,i_{2},...,i_{k}\leq n$ is equal to the set of closed walks of length $k$
from $v_{i}$ to itself in $\mathcal{H}$ (which we have denoted by $P_{k,i}$
in the statement of the Proposition).
\end{proof}

\medskip

The above Proposition allows us to write the relate moments of the weighted
adjacency matrix of a weighted graph $\mathcal{H}$ to closed walks in $%
\mathcal{H}$, as follows:

\medskip

\begin{lemma}
\label{Moments in Weighted Graphs}Let $\mathcal{H=}\left( \mathcal{V},%
\mathcal{E},\mathcal{W}\right) $ be a weighted graph with weighted adjacency
matrix $W_{\mathcal{H}}=\left[ w_{ij}\right] $. Then, 
\begin{equation*}
m_{k}\left( W_{\mathcal{H}}\right) =\frac{1}{n}\sum_{v_{i}\in \mathcal{V}%
}\sum_{p\in P_{k,i}}\omega \left( p\right) ,
\end{equation*}%
where $P_{k,i}$ is the set of closed walks of length $k$ from $v_{i}$ to
itself in $\mathcal{H}$.
\end{lemma}

\medskip

\begin{proof}
Let us denote by $\mu _{1},...,\mu _{n}$ the set of (real) eigenvalues of
the (symmetric) weighted adjacency matrix $W_{\mathcal{H}}$. We have that
the moments can be written as 
\begin{equation*}
m_{k}\left( W_{\mathcal{H}}\right) \triangleq \frac{1}{n}\sum_{i=1}^{n}\mu
_{i}^{k}=\frac{1}{n}\text{Trace}\left( W_{\mathcal{H}}^{k}\right) ,
\end{equation*}%
since $W_{\mathcal{H}}$ is a symmetric (and diagonalizable) matrix. We then
apply Proposition \ref{Moments from Walks} to rewrite the moments as follows,%
\begin{equation*}
m_{k}\left( W_{\mathcal{H}}\right) =\frac{1}{n}\sum_{i=1}^{n}\left[ W_{%
\mathcal{H}}^{k}\right] _{ii}=\frac{1}{n}\sum_{v_{i}\in \mathcal{V}%
}\sum_{p\in P_{k,i}}\omega \left( p\right) .
\end{equation*}
\end{proof}

\medskip

In Subsections \ref{High Order Moment}, we shall apply this result to
compute spectral moments of the Laplacian matrix in terms of structural
features of the network. First, we need to introduce a weighted graph that
is useful in our derivations:

\medskip

\begin{definition}
\label{Laplacian Graph}Given a simple graph $\mathcal{G=}\left( \mathcal{V},%
\mathcal{E}\right) $, we define the Laplacian graph of $\mathcal{G}$ as the
weighted graph $\mathcal{L}\left( \mathcal{G}\right) \mathcal{\triangleq }%
\left( \mathcal{V},\mathcal{E\cup S}_{n},\Gamma \right) $, where $\mathcal{S}%
_{n}=\left\{ \left\{ v,v\right\} \text{ for all }v\in \mathcal{V}\right\} $
(the set of all self-loops), and $\Gamma =\left[ \gamma _{ij}\right] $ is a
set of weights defined as:%
\begin{equation*}
\gamma _{ij}\triangleq \left\{ 
\begin{array}{ll}
-1, & \text{for }\left\{ v_{i},v_{j}\right\} \in \mathcal{E} \\ 
d_{i}, & \text{for }i=j \\ 
0, & \text{otherwise.}%
\end{array}%
\right.
\end{equation*}
\end{definition}

\begin{remark}
Note that the weighted adjacency matrix of the Laplacian graph $\mathcal{L}%
\left( \mathcal{G}\right) $ is equal to the Laplacian matrix of the simple
graph $\mathcal{G}$. Hence, we can apply Lemma \ref{Moments in Weighted
Graphs} to express the spectral moments of the Laplacian matrix $L_{\mathcal{%
G}}$ in terms of weighted walks in the Laplacian graph $\mathcal{L}\left( 
\mathcal{G}\right) $.
\end{remark}

Before we apply Lemma \ref{Moments in Weighted Graphs} to study the
Laplacian spectral moments, we must introduce the concept of subgraph
covered by a walk.

\begin{definition}
\label{Covered Subgraph}Consider a walk $p=\left(
v_{i_{1}},v_{i_{2}},...,v_{i_{k+1}}\right) $ of length $k$ in a (possibly
loopy) graph. We define the subgraph covered by $p$ as the simple graph $%
C\left( p\right) =(\mathcal{V}_{c}\left( p\right) ,\mathcal{E}_{c}\left(
p\right) )$, with node-set $\mathcal{V}_{c}\left( p\right)
=\bigcup_{r=1}^{k+1}v_{i_{r}}$, and edge-set $\mathcal{E}_{c}\left( p\right)
=\bigcup_{v_{i_{r}}\neq v_{i_{r+1}}}\left\{ v_{i_{r}},v_{i_{r+1}}\right\} $,
for $1\leq r\leq k+1.$
\end{definition}

Based on the above, we define \emph{triangles}, \emph{quadrangles} and \emph{%
pentagons} as the subgraphs covered by cycles of length three, four, and
five, respectively. Notice that self-loops are excluded from $\mathcal{E}%
_{c}\left( p\right) $ in Definition \ref{Covered Subgraph}. For example,
consider a walk $p=\left(
v_{1},v_{2},v_{2},v_{3},v_{3},v_{1},v_{3},v_{1}\right) $ in a graph with
self-loops. Then, $C\left( p\right) $ has node-set $\mathcal{V}_{c}\left(
p\right) =\left\{ v_{1},v_{2},v_{3}\right\} $ and edge-set $\mathcal{E}%
_{c}\left( p\right) =\left\{ \left\{ v_{1},v_{2}\right\} ,\left\{
v_{2},v_{3}\right\} ,\left\{ v_{3},v_{1}\right\} \right\} $. In other words, 
$C\left( p\right) $ is a simple triangle.

In what follows, we build on the above results to derive closed-form
expressions for the first five spectral moments of the Laplacian matrix in
terms of relevant structural features of the network.

\subsection{\label{Low Order Moments}Low-Order Laplacian Spectral Moments}

The following theorem, proved in \cite{PV08} via algebraic techniques,
allows us to compute the first three Laplacian spectral moments in terms of
the degree sequence and the number of triangles in the graph.

\begin{theorem}
\label{Low Order Laplacian Moments}Let $\mathcal{G}$ be a simple graph with
Laplacian matrix $L_{\mathcal{G}}$. Then, the first three spectral moments
of the Laplacian matrix are%
\begin{align}
m_{1}\left( L_{\mathcal{G}}\right) & =\frac{1}{n}S_{1},
\label{Moments as Averages} \\
m_{2}\left( L_{\mathcal{G}}\right) & =\frac{1}{n}\left( S_{1}+S_{2}\right) ,
\notag \\
m_{3}\left( L_{\mathcal{G}}\right) & =\frac{1}{n}\left( 3S_{2}+S_{3}-6\Delta
\right) ,  \notag
\end{align}%
where $S_{p}\triangleq \sum_{v_{i}\in \mathcal{V}}d_{i}^{p}$, and $\Delta $
is the total number of triangles in $\mathcal{G}$.
\end{theorem}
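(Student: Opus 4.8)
The plan is to derive all three identities uniformly from Lemma~\ref{Moments in Weighted Graphs}, rather than via the algebraic argument of \cite{PV08}. By the remark following Definition~\ref{Laplacian Graph}, the weighted adjacency matrix of the Laplacian graph $\mathcal{L}(\mathcal{G})$ coincides with $L_{\mathcal{G}}$, so Lemma~\ref{Moments in Weighted Graphs} gives
\[
m_{k}(L_{\mathcal{G}})=\frac{1}{n}\sum_{v_{i}\in \mathcal{V}}\sum_{p\in P_{k,i}}\omega(p),
\]
where $P_{k,i}$ collects the closed walks of length $k$ from $v_{i}$ to itself in $\mathcal{L}(\mathcal{G})$. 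Since $\mathcal{L}(\mathcal{G})$ carries a self-loop of weight $d_{i}$ at each node and an edge of weight $-1$ along each edge of $\mathcal{G}$, every such walk is a sequence of steps that are either self-loops or genuine edges, and $\omega(p)$ is the product of the corresponding weights $\gamma_{ij}$. The entire proof then reduces to a finite bookkeeping: for $k=1,2,3$, enumerate the closed walks of length $k$ grouped by their pattern of self-loop steps ($S$) versus edge steps ($E$), and add up the weights.

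First I would dispatch the two easy cases. For $k=1$ the only closed walk at $v_{i}$ is the self-loop $(v_{i},v_{i})$ of weight $d_{i}$, so $n\,m_{1}=\sum_{i}d_{i}=S_{1}$. For $k=2$ a closed walk is either the double self-loop of weight $d_{i}^{2}$, or an out-and-back step $(v_{i},v_{j},v_{i})$ along an edge of weight $(-1)^{2}=1$, and there are $d_{i}$ of the latter; summing yields $n\,m_{2}=\sum_{i}(d_{i}^{2}+d_{i})=S_{2}+S_{1}$. These cases fix the counting method.

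The substance is $k=3$, where a closed walk $(v_{i},v_{a},v_{b},v_{i})$ is classified by which of its three steps are self-loops. I would first observe that every pattern with \emph{exactly two} self-loops is vacuous: a self-loop step forces two consecutive nodes to coincide, while an adjacent edge step forces them to differ, a contradiction. This leaves the pattern $SSS$, contributing $d_{i}^{3}$ and hence $S_{3}$ in total; and the three single-self-loop patterns $SEE$, $ESE$, $EES$, each contributing $S_{2}$ after summing over the admissible neighbor. For instance $SEE$ produces walks $(v_{i},v_{i},v_{b},v_{i})$ with $v_{b}\sim v_{i}$ and weight $d_{i}$, so its per-vertex total is $d_{i}^{2}$; and $ESE$ produces $(v_{i},v_{a},v_{a},v_{i})$ of weight $d_{a}$, so its per-vertex total is $\sum_{v_{a}\sim v_{i}}d_{a}$, and the resulting double sum equals $\sum_{a}d_{a}^{2}=S_{2}$ once each $d_{a}$ is counted with multiplicity $d_{a}$.

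The hard part is the all-edge pattern $EEE$. Here each edge step forbids a repeated node, so $v_{i},v_{a},v_{b}$ are pairwise distinct and the covered subgraph $C(p)$ is a triangle, with weight $(-1)^{3}=-1$. The delicate accounting is the combinatorial factor: from a fixed starting vertex a triangle is traversed in exactly two orientations, and a triangle has three vertices, so the total number of $EEE$ walks over all starting nodes is $2\cdot 3\Delta=6\Delta$, contributing $-6\Delta$. Collecting the surviving families gives $n\,m_{3}=S_{3}+3S_{2}-6\Delta$, as claimed. The only genuine obstacle is getting this triangle count right while confirming that no length-three pattern has been omitted or double counted; everything else is routine summation over the degree sequence.
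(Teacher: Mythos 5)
Your proof is correct, but it takes a different route from the one the paper actually relies on for this statement: the paper does not prove Theorem~\ref{Low Order Laplacian Moments} itself, it cites the purely algebraic derivation of \cite{PV08}, which amounts to expanding $\mathrm{Trace}\left((D_{\mathcal{G}}-A_{\mathcal{G}})^{k}\right)$ and evaluating the cross terms (e.g., $\mathrm{Trace}(D_{\mathcal{G}}A_{\mathcal{G}}^{2})=S_{2}$, $\mathrm{Trace}(A_{\mathcal{G}}^{3})=6\Delta$). You instead instantiate Lemma~\ref{Moments in Weighted Graphs} on the Laplacian graph $\mathcal{L}(\mathcal{G})$ and classify length-$k$ closed walks by their self-loop/edge step patterns, which is precisely the machinery the paper reserves for $m_{4}$ and $m_{5}$ in the Appendix. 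Your bookkeeping checks out: the two-self-loop patterns are indeed vacuous in a simple graph, the three one-self-loop patterns each contribute $S_{2}$ (the $ESE$ case correctly resolving to $\sum_{a}d_{a}^{2}$ after reweighting by multiplicity), and the all-edge count $2\cdot 3\Delta=6\Delta$ with weight $(-1)^{3}$ gives the $-6\Delta$ term; the result agrees with the algebraic expansion. What your route buys is uniformity and self-containment -- it makes the low-order formulas a special case of the same walk-classification argument used for the higher moments, rather than an imported result -- at the cost of the case enumeration, which the trace expansion avoids for $k\leq 3$ but which becomes unavoidable (and is exactly what the paper does) for $k=4,5$.
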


\medskip

In \cite{PV08}, Theorem \ref{Low Order Laplacian Moments} was proved
using a purely algebraic approach. This algebraic approach presents the
limitation of not being applicable to compute moments of order greater than
three. In what follows, we propose an alternative graph-theoretical approach
that allows to compute higher-order spectral moments of the Laplacian
matrix, beyond the third order. In particular, according to Lemma \ref{Moments in Weighted Graphs}, we can compute the %
$k$-th spectral moment of the Laplacian of $\mathcal{G}$ by analyzing the set of
closed walks of length $k$ in the Laplacian graph $\mathcal{L}\left( \mathcal{G}\right) $.

\subsection{\label{High Order Moment}Higher-Order Laplacian Spectral Moments}

In this Subsection, we apply the set of graph-theoretical tools introduced
above to compute the fourth- and fifth-order spectral moments of the
Laplacian matrix. We first define the collection of structural measurements
that are involved in our expressions. Let us denote by $t_{i}$, $q_{i}$, and 
$p_{i}$ the number of triangles, quadrangles, and pentagons touching node $%
v_{i}$ in $\mathcal{G}$, respectively. The total number of quadrangles and
pentagons in $\mathcal{G}$ are denoted by $Q$ and $P$, respectively. The
following terms define structural correlations that are relevant in our
analysis:%
\begin{eqnarray}
C_{dd} &\triangleq &\frac{1}{n}\sum_{v_{i}\sim v_{j}}d_{i}d_{j},\quad
C_{d^{2}d}\triangleq \frac{1}{n}\sum_{v_{i}\sim v_{j}}d_{i}^{2}d_{j},
\label{Correlation terms} \\
C_{dt} &\triangleq &\frac{1}{n}\sum_{v_{i}\in \mathcal{V}}d_{i}t_{i},\quad
C_{d^{2}t}\triangleq \frac{1}{n}\sum_{v_{i}\in \mathcal{V}}d_{i}^{2}t_{i}, 
\notag \\
C_{dq} &\triangleq &\frac{1}{n}\sum_{v_{i}\in \mathcal{V}}d_{i}q_{i},\quad
D_{dd}\triangleq \frac{1}{n}\sum_{v_{i}\sim v_{j}}d_{i}d_{j}\left\vert 
\mathcal{N}_{i}\cap \mathcal{N}_{j}\right\vert ,  \notag
\end{eqnarray}%
where $\left\vert \mathcal{N}_{i}\cap \mathcal{N}_{j}\right\vert $ is the
number of common neighbors shared by $v_{i}$ and $v_{j}$. The main result in
this section relates the fourth and fifth Laplacian spectral moments to
local structural measurements and correlation terms, as follows:

\begin{theorem}
\label{High Order Laplacian Moments}Let $\mathcal{G}$ be a simple graph with
Laplacian matrix $L_{\mathcal{G}}$. Then, the fourth and fifth Laplacian
moments can be written as%
\begin{eqnarray}
m_{4}\left( L_{\mathcal{G}}\right) &=&\frac{1}{n}\left(
-S_{1}+2S_{2}+4S_{3}+S_{4}+8Q\right)  \label{Fourth and Fifth Moments} \\
&&+4C_{dd}-8C_{dt},  \notag \\
m_{5}\left( L_{\mathcal{G}}\right) &=&\frac{1}{n}\left(
-5S_{2}+5S_{3}+5S_{4}+S_{5}+30\Delta -10P\right)  \notag \\
&&+10\left( C_{dd}+C_{d^{2}d}-C_{dt}-C_{d^{2}t}+C_{dq}-D_{dd}\right)  \notag
\end{eqnarray}%
where $S_{p}\triangleq \sum_{v_{i}\in \mathcal{V}}d_{i}^{p}$, and the
correlation terms $C_{dd},$ $C_{dt},$ $C_{dq},$ $C_{d^{2}d},$ $C_{d^{2}t},$
and $D_{dd}$ are defined in (\ref{Correlation terms}).
\end{theorem}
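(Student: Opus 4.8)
The plan is to obtain both identities directly from Lemma~\ref{Moments in Weighted Graphs} applied to the Laplacian graph $\mathcal{L}\left( \mathcal{G}\right) $, whose weighted adjacency matrix coincides with $L_{\mathcal{G}}$ by the Remark following Definition~\ref{Laplacian Graph}. Writing $n\,m_{k}\left( L_{\mathcal{G}}\right) =\sum_{v_{i}\in \mathcal{V}}\sum_{p\in P_{k,i}}\omega \left( p\right) $ for $k=4,5$, the theorem reduces to a weighted count of closed walks of length $4$ and $5$ in $\mathcal{L}\left( \mathcal{G}\right) $. The essential simplification is that each step of such a walk is either a \emph{self-loop} at the current node $v_{j}$, carrying weight $\gamma _{jj}=d_{j}$, or a \emph{genuine edge} step, carrying weight $\gamma _{ab}=-1$; hence the weight of a walk factors as $\left( -1\right) ^{e}\prod d_{(\cdot )}$, where $e$ is the number of edge steps and the product runs over the nodes occupied while taking a self-loop.

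First I would classify the closed walks by the number $e$ of edge steps, equivalently by the subgraph $C\left( p\right) $ they cover in the sense of Definition~\ref{Covered Subgraph}. The key structural remark is that the subsequence of edge steps, read in order, is itself a \emph{closed} walk in $\mathcal{G}$ (self-loops do not move the token), so its length $e$ must return the walk to its start: the edge steps thus realize a single edge traversed back and forth, a path, a triangle, a quadrangle, or, for $k=5$, a pentagon, while each self-loop merely deposits the degree of the node currently occupied. As a warm-up that fixes the method, the fourth moment splits into exactly four families: $e=0$ gives $\sum_{i}d_{i}^{4}=S_{4}$; $e=2$ (one edge out-and-back with two interleaved self-loops) gives the $S_{3}$ and $4C_{dd}$ contributions, according to whether the two self-loops sit at the same endpoint, producing $d_{i}^{2}$, or at different endpoints, producing $d_{i}d_{j}$; $e=3$ forces a triangle plus a single self-loop, and summing the deposited degree over the four insertion slots yields $-8C_{dt}$; and $e=4$ (edge-only closed $4$-walks) decomposes into single edges, paths, and genuine quadrangles, producing $-S_{1}+2S_{2}+8Q$. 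Collecting these reproduces the stated $m_{4}$.

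For the fifth moment I would run the same bookkeeping over closed walks of length $5$, now organized by $e\in \left\{ 0,2,3,4,5\right\} $ (the value $e=1$ being impossible for a closed walk). Several contributions are clean and come from a single family: $e=0$ gives $S_{5}$; $e=3$, a triangle carrying two self-loops, gives $-10C_{d^{2}t}-10D_{dd}$, the $D_{dd}$ term appearing precisely because two self-loops landing on two \emph{distinct} vertices of a triangle sum, over all triangles sharing an edge $\left\{ v_{i},v_{j}\right\} $, to $\sum_{v_{i}\sim v_{j}}d_{i}d_{j}\left\vert \mathcal{N}_{i}\cap \mathcal{N}_{j}\right\vert $; and $e=5$ restricted to genuine $5$-cycles gives $-10P$, since each pentagon is covered by $5\times 2$ directed walks of weight $\left( -1\right) ^{5}$. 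The remaining families overlap in the terms they feed: $e=2$ (single edge with three self-loops) produces the cubic single-edge quantities $5S_{4}$ and $C_{d^{2}d}$; $e=4$ (an edge-only closed $4$-walk carrying one self-loop) supplies the remaining $S_{p}$ terms, the quadratic $C_{dd}$, and $C_{dq}$ from a quadrangle depositing one degree; and $e=5$ restricted to triangle-covering walks supplies $\Delta $ together with $C_{dt}$. The $C_{dt}$ contribution is the instructive one: it comes from walks whose covered subgraph is a triangle plus a back-and-forth pendant edge, where the factor $d_{i}$ is \emph{combinatorial}, counting the admissible pendant neighbors rather than coming from a self-loop weight.

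The main obstacle is exactly this length-$5$ enumeration. Unlike the cases $k\leq 4$, a single covered subgraph is realized by many distinct walk shapes, and the same correlation term receives contributions from several families; moreover, as the pendant example shows, degree factors enter through two different mechanisms, self-loop weights and the counting of neighbor choices, which must be tracked separately. The most delicate point is disentangling, within the $e=5$ family, the walks that cover \emph{only} a triangle (feeding $\Delta $) from those whose extra step uses a genuinely new edge (feeding $C_{dt}$): summing the pendant endpoint over $\mathcal{N}_{i}$ and removing the degenerate cases in which it coincides with a triangle vertex is what fixes the coefficients. I would therefore organize the computation as a table indexed by covered subgraph and self-loop placement, verify the multiplicity of each row, and resum the node-indexed degree products into the six correlation terms of~(\ref{Correlation terms}); the independently derived low-order formulas of Theorem~\ref{Low Order Laplacian Moments}, together with a direct numerical check on small graphs, then serve to confirm the coefficients and the common factor $10$.
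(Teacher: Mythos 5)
Your proposal follows essentially the same route as the paper's proof: apply Lemma \ref{Moments in Weighted Graphs} to the Laplacian graph $\mathcal{L}(\mathcal{G})$ and partition the closed walks of length $4$ and $5$ according to the subgraph they cover (your indexing by the number $e$ of edge steps is just a regrouping of the paper's case analysis $T_{4a},\dots,T_{4e}$ and $T_{5a},\dots,T_{5g}$), and every family you identify contributes exactly the term the paper attributes to it, including the correct split of the $30\Delta/n$ contribution between the triangle-only walks ($-30\Delta/n$) and the triangle-plus-pendant-edge walks ($+60\Delta/n-10C_{dt}$). The only part left implicit is the routine multiplicity count for each length-$5$ family, which you correctly flag as the delicate step and which is precisely what the paper's Appendix carries out.
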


\begin{proof}
In the Appendix.
\end{proof}

\medskip

\begin{remark}
Theorem \ref{Low Order Laplacian Moments} relates purely algebraic
properties -- the spectral moments -- to structural features of the network,
namely the degree sequence, the number of cycles of length 3 and 5, and all
the correlation terms defined in (\ref{Correlation terms}). The key steps
behind the proof are: ($i$) Relate the spectral moments $m_{4}\left( L_{%
\mathcal{G}}\right) $ and $m_{5}\left( L_{\mathcal{G}}\right) $ with closed walks of length
four and five in the Laplacian graph $L\left( \mathcal{G}\right) $, and ($ii$) classify the set of closed
walks in $L\left( \mathcal{G}\right) $ into subsets according to
the subgraph covered by each walk.
\end{remark}


In the next section, we present a series of semidefinite programs (SDP's)
whose solutions provide optimal bounds on the Laplacian spectral radius and
spectral gap in terms of Laplacian spectral moments.

\section{\label{Optimal Bounds}Optimal Laplacian Bounds from Spectral Moments%
}

In this section, we introduce a novel approach to compute bounds on the
spectral gap and the spectral radius of the Laplacian matrix from a
truncated sequence of Laplacian spectral moments. More explicitly, the
problem solved in this section can be stated as follows:\medskip

\begin{problem}[Moment-based bounds]
\label{Lasserre problem}Given a truncated sequence of Laplacian spectral
moments $\left( m_{k}\left( L_{\mathcal{G}}\right) \right) _{k=1}^{K}$, find
bounds on the spectral gap and the spectral radius of the Laplacian matrix $%
L_{\mathcal{G}}$.
\end{problem}

\medskip

In this section, we propose a solution to the above problem based on a
recent result in \cite{Las11}. In \cite{Las11}, Lasserre developed an
approach to find bounds on the support of an unknown density function when
only a sequence of its moments is available. In order to adapt Problem 1 to
this framework, we need to introduce some definitions. Given a simple
connected graph $\mathcal{G}$ with Laplacian eigenvalues $\left\{ \lambda
_{i}\right\} _{i=1}^{n}$, we define the \emph{spectral density} of the
nontrivial eigenvalue spectrum as%
\begin{equation}
\rho _{\mathcal{G}}\left( \lambda \right) \triangleq \frac{1}{n-1}%
\sum_{i\geq 2}\delta \left( \lambda -\lambda _{i}\right) ,
\label{Spectral Measure}
\end{equation}%
where $\delta \left( \cdot \right) $ is the Dirac delta function. Notice how
we have excluded the trivial eigenvalue, $\lambda _{1}=0$, from the spectral
density; hence, the support \footnote{%
Recall that the support of a density function $\mu $ on $\mathbb{R}$,
denoted by $supp\left( \mu \right) $, is the smallest closed set $B$ such
that $\mu \left( \mathbb{R}\backslash B\right) =0$.} of $\rho _{\mathcal{G}%
}\left( \lambda \right) $ is equal to $supp(\rho _{\mathcal{G}})=\left\{
\lambda _{i}\right\} _{i=2}^{n}$. The moments of the spectral density in (%
\ref{Spectral Measure}), denoted by $\overline{m}_{k}\left( L_{\mathcal{G}%
}\right) $, can be written in terms of the spectral moments of $L_{\mathcal{G%
}}$, as follows%
\begin{align}
\overline{m}_{k}\left( L_{\mathcal{G}}\right) & \triangleq \int_{\mathbb{R}%
}\lambda ^{k}\frac{1}{n-1}\sum_{i=2}^{n}\delta \left( \lambda -\lambda
_{i}\right) ~d\lambda  \notag \\
& =\frac{1}{n-1}\sum_{i=2}^{n}\lambda _{i}^{k}=\frac{n}{n-1}m_{k}\left( L_{%
\mathcal{G}}\right) ,  \label{Nontrivial Moments}
\end{align}%
for all $k\geq 1$ (where we have used the fact that $\lambda _{1}=0$, in our
derivations).

In what follows, we propose a solution to Problem \ref{Lasserre problem}
using a technique proposed by Lasserre in \cite{Las11}. In that paper, the
following problem was addressed:

\medskip

\begin{problem}
\label{Lasserre Paper Problem}Consider a truncated sequence of moments $%
\left( M_{k}\right) _{1\leq r\leq K}$ corresponding to an unknown density
function $\mu \left( \lambda \right) $, i.e., $M_{k}\triangleq \int \lambda
^{k}d\mu \left( \lambda \right) $. Denote by $\left[ a,b\right] $ the
smallest interval containing the support of $\mu $. Compute an upper bound $%
\alpha \geq a$ and a lower bound $\beta \leq b$ when only the truncated
sequence of moments is available.
\end{problem}

\medskip

In the context of Problem 1, we have access to a truncated sequence of five
spectral moments, $(m_{k}(L_{\mathcal{G}}))_{1\leq k\leq 5}$, corresponding
to the unknown spectral density function $\rho _{\mathcal{G}}$ and given by
the expressions (\ref{Moments as Averages}), (\ref{Fourth and Fifth Moments}%
), and (\ref{Nontrivial Moments}). In this context, the smallest interval $%
[a,b]$ containing $supp(\rho _{\mathcal{G}})$ is equal to $[\lambda
_{2},\lambda _{n}]$. Therefore, a solution to Problem \ref{Lasserre Paper
Problem} would directly provide an upper bound on the spectral gap, $\alpha
\geq \lambda _{2}$, and a lower bound on the spectral radius, $\beta \leq
\lambda _{n}$. We now describe a numerical scheme proposed in \cite{Las11}
to solve Problem \ref{Lasserre Paper Problem}. This solution is based on a
series of semidefinite programs in one variable. In order to formulate this
series of SDP's, we need to introduce some definitions. For any $s\in 
\mathbb{N}$, let us consider a truncated sequence of moments $\mathbf{M}%
=\left( M_{k}\right) _{k=1}^{2s+1}$, associated with an unknown density
function $\mu $. We define the following Hankel matrices of moments:%
\begin{equation}
R_{2s}\left( \mathbf{M}\right) \triangleq \left[ 
\begin{array}{cccc}
1 & M_{1} & \cdots & M_{s} \\ 
M_{1} & M_{2} & \cdots & M_{s+1} \\ 
\vdots & \vdots & \ddots & \vdots \\ 
M_{s} & M_{s+1} & \cdots & M_{2s}%
\end{array}%
\right] ,\text{ }R_{2s+1}\left( \mathbf{M}\right) \triangleq \left[ 
\begin{array}{cccc}
M_{1} & M_{2} & \cdots & M_{s+1} \\ 
M_{2} & M_{3} & \cdots & M_{s+2} \\ 
\vdots & \vdots & \ddots & \vdots \\ 
M_{s+1} & M_{s+2} & \cdots & M_{2s+1}%
\end{array}%
\right] .  \label{Hankel Matrices}
\end{equation}%
We also define the \emph{localizing matrix}\footnote{%
A more general definition of localizing matrix can be found in \cite{LasBOOK}%
. For simplicity, we restrict our definition to the particular form used in
our problem.} $H_{s}\left( x,\mathbf{M}\right) $ as,%
\begin{equation}
H_{s}\left( x,\mathbf{M}\right) \triangleq R_{2s+1}\left( \mathbf{M}\right)
-x~R_{2s}\left( \mathbf{M}\right) .  \label{Localizing matrix}
\end{equation}

Using the above matrices, Lasserre proposed in \cite{Las11} the following
series of SDP's to find a solution for Problem \ref{Lasserre Paper Problem}:

\medskip

\emph{Solution to Problem 2}: Let $\mathbf{M}=\left( M_{k}\right)
_{k=1}^{2s+1}$ be a truncated sequence of moments associated with an unknown
density function $\mu $. Then%
\begin{align}
a& \leq \alpha _{s}\left( \mathbf{M}\right) \triangleq \max_{x}\left\{
x:H_{s}\left( x,\mathbf{M}\right) \succeq 0\right\} ,
\label{Bound min eigenval} \\
b& \geq \beta _{s}\left( \mathbf{M}\right) \triangleq \min_{x}\left\{
x:-H_{s}\left( x,\mathbf{M}\right) \succeq 0\right\} ,
\label{Bound max eigenval}
\end{align}%
where $\left[ a,b\right] $ is the smallest interval containing the support
of $\mu $.

\medskip

Therefore, we can directly apply the above result to solve Problem \ref%
{Lasserre problem} by considering the sequence of moments $\overline{\mathbf{%
m}}\triangleq \left( \overline{m}_{r}\left( L_{\mathcal{G}}\right) \right)
_{r=1}^{2s+1}=(\frac{n}{n-1}m_{r}\left( L_{\mathcal{G}}\right)
)_{r=1}^{2s+1} $ in the statement of the solution to Problem \ref{Lasserre
Paper Problem}. Since this sequence of moments corresponds to the spectral
density $\rho _{\mathcal{G}}$, with support $\{\lambda _{i}\}_{i=2}^{n}$,
the solutions in (\ref{Bound min eigenval}) and (\ref{Bound max eigenval})
directly provide the following bounds on the spectral radius and spectral
gap:

\emph{Solution to Problem 1}: Let $\overline{\mathbf{m}}\triangleq (\frac{n}{%
n-1}m_{r}\left( L_{\mathcal{G}}\right) )_{r=1}^{2s+1}$ be a truncated
sequence of (scaled) Laplacian spectral moments associated with a graph $%
\mathcal{G}$. Then the Laplacian spectral gap and spectral radius of $%
\mathcal{G}$ satisfy the following bounds:%
\begin{align}
\lambda _{2}& \leq \alpha _{s}\left( \overline{\mathbf{m}}\right) \triangleq
\max_{x}\left\{ x:H_{s}\left( x,\overline{\mathbf{m}}\right) \succeq
0\right\} ,  \label{Bound Spectral Gap} \\
\lambda _{n}& \geq \beta _{s}\left( \overline{\mathbf{m}}\right) \triangleq
\min_{x}\left\{ x:-H_{s}\left( x,\overline{\mathbf{m}}\right) \succeq
0\right\} .  \label{Bound Spectral Radius}
\end{align}

In Section \ref{Laplacian Spectral Analysis}, we derived expressions for the
first five Laplacian spectral moments, $\left( m_{r}\left( L_{\mathcal{G}%
}\right) \right) _{r=1}^{5}$, in terms of structural features of the
network, namely, the degree sequence, the number of triangles, quadrangles
and pentagons, and the correlation terms in (\ref{Correlation terms}).
Therefore, we can apply the \emph{Solution to Problem 1} to find bounds on $%
\lambda _{2}$ and $\lambda _{n}$.

In this section, we have presented an optimization-based approach to compute
optimal bounds on the Laplacian spectral gap and spectral radius from a
truncated sequence of Laplacian spectral moments. The truncated sequence of
spectral moments $\left( m_{k}\left( L_{\mathcal{G}}\right) \right)
_{k=1}^{5}$ can be written in terms of local structural measurements using (%
\ref{Moments as Averages}) and (\ref{Fourth and Fifth Moments}). Hence, the
above methodology allows to compute bounds on the spectral radius and
spectral gap of the Laplacian matrix given a collection of local structural
features of the network. In the following section, we illustrate the usage
of this approach with numerical examples.

\section{\label{Simulations}Structural Analysis and Simulations}

In this section, we apply the moment-based approach herein proposed
to study the relationship between structural and spectral properties of an
unweighted, undirected graph representing the structure of the high-voltage
transmission network of Spain (the adjacency of this network is available,
in MATLAB\ format, in \cite{MAT1}). The number of nodes (buses) and edges
(transmission lines) in this network are $n=98$ and $e=175$, respectively. From this dataset, we compute the set of structural
properties involved in (\ref{Moments as Averages}) and (\ref{Fourth and
Fifth Moments}), namely, the power-sums of the degrees $\left( S_{r}\right)
_{r=1}^{5}=\left( 350,1692,9836,64056,44942\right) $, the number of
cycles $\Delta =79$, $Q=134$, $P=232$, and the
correlation terms $C_{dd}=42.58$, $C_{d^{2}d}=249.41$, $C_{dt}=13.98$%
, $C_{d^{2}t}=88.69$, $C_{dq}=33.11$, and $D_{dd}=80.77$. Using this collection of structural
measurements, we use (\ref{Moments as Averages}) and (\ref{Fourth and Fifth
Moments}) to compute the first five Laplacian spectral moments of the
Spanish transmission network: $\left( m_{k}\left( L_{\mathcal{G}}\right)
\right) _{k=1}^{5}=\left( 3.571,20.83,147.33,1155.5,9686.6\right) $.
Using this sequence of spectral moments and the methodology described in
Section \ref{Optimal Bounds}, we compute bounds on the spectral gap and
spectral radius, $\alpha _{2}$ and $\beta _{2}$, solving
the SDP's in (\ref{Bound Spectral Gap}) and (\ref{Bound Spectral Radius}).
The numerical values for these bounds, as well as the exact values for the
spectral gap and spectral radius are: $\beta _{2}=9.18\leq \lambda
_{n}=10.66$ and $\lambda _{2}=0.077\leq \alpha
_{2}=0.86$.

Our numerical analysis reveals that the Laplacian spectral radius
and spectral moments of the electrical transmission network are strongly
constrained by local structural features of the network. On the other hand,
the spectral gap cannot be efficiently bounded using local structural
features only, since the spectral gap strongly depends on the global
connectivity of the network. This limitation is inherent to all spectral
bounds based on local structural properties (see \cite{Abreu07} for a wide
collection of spectral bounds). In the following example, we illustrate this
limitation with a simple example.

\begin{example}
Consider a ring graph with $n=12$ nodes, which we denote
by $R_{12}$. The eigenvalues of the Laplacian matrix of a ring
graph of length $l$, $R_{l}$, are equal to $\lambda _{i}=2-2\cos (2\pi i/l)$%
, for $i=0,...,l-1$ \cite{Big93}. Therefore, the Laplacian spectral gap and spectral radius of $R_{12}$ %
are $\lambda _{2}=2-2\cos \pi /6\approx 0.2679$ and $\lambda _{n}=4$%
, respectively. We can also compute the moment-based
bounds $\alpha _{2}$ and $\beta _{2}$ using local
structural measurements, as follows. The degrees of all the nodes in $%
R_{12} $ are $d_{i}=2$; thus, the power sums of the
degrees are equal to $S_{k}=2^{k}12$. The number of triangles,
quadrangles and pentagons are $\Delta =Q=P=0$. The correlation
terms are $C_{dd}=4$, $C_{d^{2}d}=8$, and the rest of
correlation terms in (\ref{Correlation terms}) are equal to zero. Based on
these structural measurements, we have from (\ref{Moments as Averages}) and (\ref{Fourth and Fifth Moments}) that the first five Laplacian spectral
moments are $\left( m_{r}\left( L_{\mathcal{G}}\right) \right)
_{r=1}^{5}=\left( 2,6,20,70,252\right) $, and the resulting
moment-based bounds from (\ref{Bound Spectral Gap}) and (\ref{Bound Spectral
Radius})\ are $\beta _{2}=3.732\leq 4$ and $%
\alpha _{2}=0.2679\approx \lambda _{2}$. Therefore, both the bounds on the
spectral radius and the spectral gap are very tight for $R_{12}$.
In particular, $\alpha _{2}$ is remarkably close
to $\lambda _{2}$.

On the other hand, we can construct graphs with the same local
structural properties (and, therefore, the same first five spectral moments,
and bounds $\alpha _{2}$ and $\beta _{2}$), but very
different spectral gap, as follows. Consider a graph of $12$ nodes
consisting in two disconnected rings of length 6. It is easy to verify that
this (disconnected) graph presents the same local structural features as a
connected ring of length 12, namely, the degrees of all the nodes
are $d_{i}=0$, the number of cycles $\Delta=Q=P=0$, and
the correlation terms are the same as the ones computed above. In contrast to $R_{12}$, 
the spectral gap of this disconnected graph is equal
to zero, $\lambda _{2}=0$, which is very different than the
moment-based bound $\alpha _{2}$.
\end{example}

In general, the Laplacian spectral gap is a global property that
quantifies how `well-connected' a network is \cite{KM06}. Since the
structural measurements used in our bounds (degree sequence, correlation
terms, etc.) have a local nature, they do not contain enough information to
determine how well connected the network is globally. In other words, it is
often possible to find two different graphs with identical local structural
features but radically different global structure, as we have illustrated in
the above example.

\section{Conclusions}

This paper studies the relationship between \emph{local} structural
features of large complex networks and \emph{global }spectral properties of
their Laplacian matrices. In Section \ref{Laplacian Spectral Analysis}, we
have proposed a graph-theoreical approach to compute the first five
Laplacian spectral moments of a network from a collection of local
structural measurements. In Section \ref{Optimal Bounds}, we have proposed
an optimization-based approach, based on a recent result by Lasserre \cite{Las11}, 
to compute bounds on the Laplacian spectral radius and spectral gap
of a network from a truncated sequence of spectral moments. Our bounds take
into account the effect of important structural properties that are usually
neglected in most of the bounds found in the literature, such as the
distribution of cycles and other structural correlations. Our analysis shows
that local structural features of the network strongly constrain the
Laplacian spectral moments and spectral radius. On the other hand, local
structural features are not enough to characterize the Laplacian spectral
gap, since this quantity strongly depends on how `well-connected' the
network is globally.

\appendix

\section{Proof of Lemma \protect\ref{Lemma 5th metrics}}

\emph{Theorem} \ref{High Order Laplacian Moments} %
Let $\mathcal{G}$ be a simple graph with Laplacian matrix $L_{\mathcal{G}}$.
Then, the fourth and fifth Laplacian moments can be written as%
\begin{eqnarray*}
m_{4}\left( L_{\mathcal{G}}\right) &=&\frac{1}{n}\left(
-S_{1}+2S_{2}+4S_{3}+S_{4}+8Q\right) \\
&&+4C_{dd}-8C_{dt}, \\
m_{5}\left( L_{\mathcal{G}}\right) &=&\frac{1}{n}\left(
-5S_{2}+5S_{3}+5S_{4}+S_{5}+30\Delta -10P\right) \\
&&+10\left( C_{dd}+C_{d^{2}d}-C_{dt}-C_{d^{2}t}+C_{dq}-D_{dd}\right)
\end{eqnarray*}%
where $S_{r}=\sum_{v_{i}\in \mathcal{V}}d_{i}^{r}$, and the correlation
terms $C_{dd},$ $C_{dt},$ $C_{dq},$ $C_{d^{2}d},$ $C_{d^{2}t},$ and $D_{dd}$
are defined in (\ref{Correlation terms}).

\begin{proof}
As in Theorem \ref{Low Order Laplacian Moments}, we use Lemma \ref{Moments
in Weighted Graphs} to compute the Laplacian spectral moments in terms of
weighted sums of closed walks in the weighted Laplacian graph $\mathcal{L}%
_{G}$. In order to compute the fourth Laplacian spectral moment, we classify
the types of possible closed walks of length 4 into subsets according to the
structure of the underlying graph covered by the walk. Specifically, two
walks $p_{1}$ and $p_{2}$ belong to the same type if the subgraphs covered
by the walks, denoted by $C\left( p_{1}\right) $ and $C\left( p_{2}\right) $
according to Definition \ref{Covered Subgraph}, are isomorphic. We enumerate
the possible types in Fig. \ref{fig_8} and we denote the corresponding sets
of walks as $P_{4a}^{\left( i\right) }$, $P_{4b}^{\left( i\right) }$, $%
P_{4c}^{\left( i\right) }$, $P_{4d}^{\left( i\right) }$, and $P_{4e}^{\left(
i\right) }$. These sets $P_{4a}^{\left( i\right) }$,...,$P_{4e}^{\left(
i\right) }$ partition the set of closed walks $P_{4,n}^{\left( i\right) }$.
Hence, we have $m_{4}\left( L_{\mathcal{G}}\right) =\frac{1}{n}%
\sum_{v_{i}\in \mathcal{V}}\sum_{x\in \left\{ a,b,c,d,e\right\} }\sum_{p\in
P_{4x}^{\left( i\right) }}\omega \left( p\right) .$

\begin{figure}[t]
\centering\includegraphics[width=1.0\textwidth]{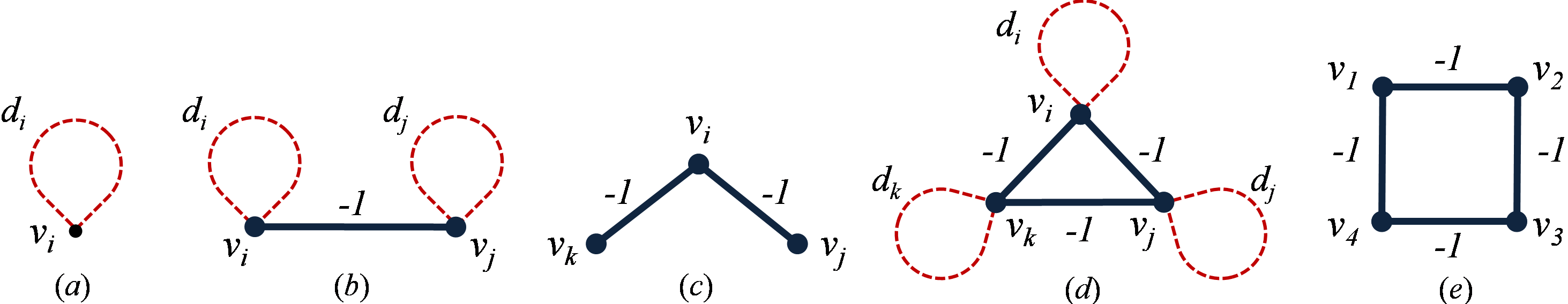}
\caption{Collection of possible graphs covered by closed walks of length 4.}
\label{fig_8}
\end{figure}

We now analyze each one of the terms in the above summations. For
convenience, we define $T_{4x}\triangleq \frac{1}{n}\sum_{v_{i}\in \mathcal{V%
}}\sum_{p\in P_{4x}^{\left( i\right) }}\omega \left( p\right) ,$ and analyze
the term $T_{4x}$ for $x\in \left\{ a,b,c,d\right\} $:

(\emph{a}) For $x=a$, we have that the weights $\omega \left( p\right) $ of
the walks in $P_{4a}^{\left( i\right) }$ are all the same, and equal to $%
d_{i}^{4}$. Hence, $T_{4a}=\frac{1}{n}\sum_{i}d_{i}^{4}=S_{4}/n.$

(\emph{b}) For $x=b$, the weights of the walks in $P_{4b}^{\left( i\right) }$
are equal to $2+4\left( d_{i}^{2}+d_{j}^{2}+d_{i}d_{j}\right) $. Hence, $%
T_{4b}=\frac{1}{n}\sum_{v_{i}\sim v_{j}}2+4\left(
d_{i}^{2}+d_{j}^{2}+d_{i}d_{j}\right) =\frac{1}{n}\left( S_{1}+4S_{3}\right)
+4C_{dd}.$

(\emph{c}) For $x=c$, the weights of the walks in $P_{4c}^{\left( i\right) }$
(i.e., walks that cover the two-chain graph) are equal to $4$. Hence, $%
T_{4c}=\frac{1}{n}\sum_{v_{j}\sim v_{i}\sim v_{k}}4\overset{(i)}{=}\frac{1}{n%
}\sum_{i=1}^{n}\binom{d_{i}}{2}4=\frac{2}{n}\left( S_{2}-S_{1}\right) ,$
where in equality (\emph{i}) we have used the fact that the number of
two-chain graphs whose center node is $v_{i}$ is equal to $\binom{d_{i}}{2}$.

\begin{figure*}[t]
\centering\includegraphics[width=1.0\textwidth]{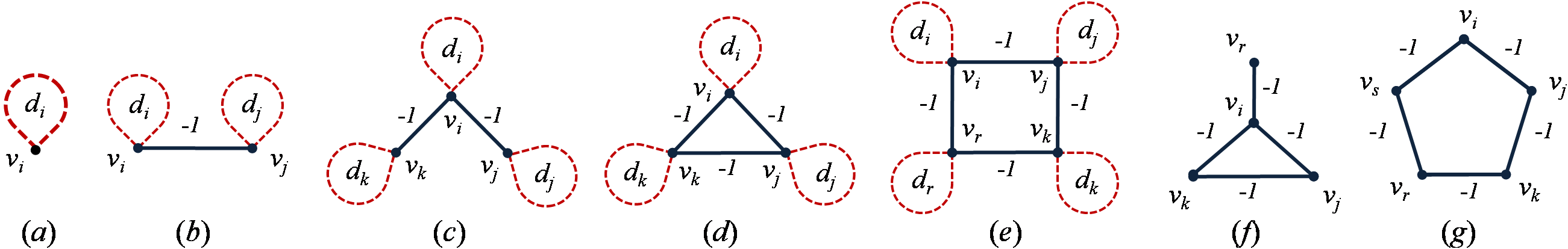}
\caption{Collection of possible graphs covered by closed walks of length 5.}
\label{fig_9}
\end{figure*}

(\emph{d}) For $x=d$, the weights of the walks in $P_{4d}^{\left( i\right) }$
are equal to $-8\left( d_{i}+d_{j}+d_{k}\right) $. Hence, $T_{4d}=\frac{1}{n}%
\sum_{v_{i}\sim v_{j}\sim v_{k}\sim v_{i}}-8\left( d_{i}+d_{j}+d_{k}\right)
=-\frac{8}{n}\sum_{i=1}^{n}\sum_{j=1}^{n}\sum_{k=1}^{n}3t_{ijk}d_{i},$where $%
t_{ijk}$ is an indicator function that takes value 1 if $v_{i}\sim v_{j}\sim
v_{k}\sim v_{i}$. Since $\sum_{j=1}^{n}\sum_{k=1}^{n}3t_{ijk}=t_{i}$ (the
number of triangles touching node $v_{i}$), we have that $T_{4d}=-\frac{8}{n}%
\sum_{i=1}^{n}t_{i}d_{i}=-8C_{dt}.$

(\emph{e}) For $x=e$, the weights of the walks in $P_{4e}^{\left( i\right) }$
are equal to $8$. Hence, $T_{4e}=\frac{1}{n}\sum_{\substack{ v_{i}\sim
v_{j}\sim v_{k}\sim v_{r}\sim v_{i}  \\ \text{s.t. }1\leq i<j<k<r\leq n}}%
8=8Q/n.$

Finally, since $m_{4}\left( L_{\mathcal{G}}\right)
=T_{4a}+T_{4b}+T_{4c}+T_{4d}$, we obtain the expression for the fourth
Laplacian spectral moment in the statement of the theorem after simple
algebraic simplifications.

In order to derive a similar expression for the fifth-order Laplacian
spectral moments, we follow an identical approach. Below, we provide the
main steps in the derivations. As before, we partition the set of closed
walks $P_{5,n}^{\left( i\right) }$ according to the subgraph covered by the
walk. We show the structure of the possible subgraphs in Fig. \ref{fig_9}.

We now analyze each one of the terms $T_{5x}\triangleq \frac{1}{n}%
\sum_{v_{i}\in \mathcal{V}}\sum_{p\in P_{5x}^{\left( i\right) }}\omega
\left( p\right) $ for $x\in \left\{ a,b,...,g\right\} $:

(\emph{a}) For $x=a$, we have $T_{5a}=\frac{1}{n}%
\sum_{i=1}^{n}d_{i}^{5}=S_{5}/n.$

(\emph{b}) For $x=b$, we can determine all possible closed walks of length 5
using the edge graph in Fig. \ref{fig_9}(b) and derive that $T_{5b}=\frac{1}{%
n}\sum_{v_{i}\sim v_{j}}5\left(
d_{i}+d_{j}+d_{i}^{3}+d_{j}^{3}+d_{i}^{2}d_{j}+d_{i}d_{j}^{2}\right) =\frac{5%
}{n}\left( S_{2}+S_{4}\right) +10C_{d^{2}d}.$

(\emph{c}) For $x=c$, the weights of walks covering the two-chain graph are $%
d_{i}$, $d_{j}$, $d_{k}$. Counting the multiplicities of each type of walk,
we have that $T_{5c}=\frac{1}{n}\sum_{v_{j}\sim v_{i}\sim
v_{k}}10d_{i}+5d_{j}+5d_{k}=\frac{10}{n}\sum_{i=1}^{n}\binom{d_{i}}{2}d_{i}+%
\frac{5}{n}\sum_{i=1}^{n}\sum_{j=1}^{n}a_{ij}\left( d_{i}-1\right) d_{j},$%
where we have used that $\sum_{v_{i}\sim v_{j}\sim v_{k}}d_{i}=\sum_{i=1}^{n}%
\binom{d_{i}}{2}d_{i}$ and $\sum_{v_{j}\sim v_{i}\sim v_{k}}d_{j}=\frac{1}{2}%
\sum_{i=1}^{n}\sum_{j=1}^{n}a_{ij}\left( d_{i}-1\right) d_{j}$. Thus, $%
T_{5c}=\frac{5}{n}\sum_{i=1}^{n}d_{i}^{3}-\frac{5}{n}\sum_{i=1}^{n}d_{i}^{2}+%
\frac{5}{n}\sum_{1\leq i,j\leq n}a_{ij}d_{i}d_{j}-\frac{5}{n}%
\sum_{j=1}^{n}d_{j}^{2}=\frac{5}{n}\left( S_{3}-2S_{2}\right) +10C_{dd}$

(\emph{d}) For $x=d$, we can determine all possible closed walks of length 5
using the edge graph in Fig. \ref{fig_9}(d) and derive that where $%
b_{ij}\triangleq $ $\sum_{k=1}^{n}a_{ik}a_{jk}=\left\vert \mathcal{N}%
_{i}\cap \mathcal{N}_{j}\right\vert $, the number of common neighbors shared
by $v_{i}$ and $v_{j}$. Hence, $T_{5d}=-\frac{30\Delta }{n}-\frac{10}{n}%
\sum_{i=1}^{n}t_{i}d_{i}^{2}-\frac{10}{n}\sum_{i\sim j}a_{ij}b_{ij}\left(
d_{i}d_{j}\right) =-30\Delta /n-10C_{d^{2}t}-10D_{dd}$

(\emph{e}) For $x=e$, the weights of walks covering the quadrangle graph are 
$d_{i}$, $d_{j}$, $d_{k}$, and $d_{r}$. Counting the multiplicities of each
type of walk we have that $T_{5e}=\frac{1}{n}\sum_{v_{i}\sim v_{j}\sim
v_{k}\sim v_{r}\sim v_{i}}10\left( d_{i}+d_{j}+d_{k}+d_{r}\right) =\frac{10}{%
n}\sum_{i=1}^{n}\sum_{j=1}^{n}\sum_{k=1}^{n}\sum_{r=1}^{n}4q_{ijkr}d_{i},$%
where $q_{ijkr}$ is an indicator function that takes value 1 if $v_{i}\sim
v_{j}\sim v_{k}\sim v_{r}\sim v_{i}$. Since $\sum_{1\leq j,k,r\leq
n}4q_{ijkr}=q_{i}$ (the number of quadrangles touching node $v_{i}$), we
have that $T_{5e}=\frac{10}{n}\sum_{i=1}^{n}q_{i}d_{i}=10C_{dq}.$

(\emph{f}) For $x=f$, we have 10 possible walks covering the subgraph in
Fig. \ref{fig_9}(f). Since each walks has a weight equal to $-1$, we have
that $T_{5f}=\frac{1}{n}\sum_{v_{i}\sim v_{j}\sim v_{k}\sim v_{i}\sim
v_{r}}-10=-\frac{10}{n}\sum_{i=1}^{n}\left( d_{i}-2\right) t_{i},$ where in
the last equality we take into account that the number of subgraphs of the
type depicted in Fig. \ref{fig_9}(f) and centered at node $v_{i}$ is equal
to the number of triangles touching node $v_{i}$, $t_{i}$, multiplied by $%
\left( d_{i}-2\right) $ (where we have subtracted $-2$ to the degree to
discount the two edges touching $v_{i}$ that are part of each triangle
counted in $t_{i}$). Hence, we have that $T_{5f}=-\frac{10}{n}%
\sum_{i=1}^{n}d_{i}t_{i}+\frac{10}{n}\sum_{i=1}^{n}2t_{i}=-10C_{dt}+60\Delta
/n.$

(\emph{g}) For $x=f$, we have 10 possible walks on the pentagon and the
associated weight of each walk is $-1$. Hence, $T_{5g}=\frac{1}{n}%
\sum_{v_{i}\sim v_{j}\sim v_{k}\sim v_{r}\sim v_{s}\sim v_{i}}-10=-10P/n,$%
where $P$ is the total number of pentagons in $\mathcal{G}$.

Finally, since $m_{5}\left( L_{\mathcal{G}}\right) =T_{5a}+T_{5b}+...+T_{5g}$%
, we obtain the expression for the fifth Laplacian spectral moment in the
statement of the theorem after simple algebraic simplifications.\bigskip
\end{proof}

\end{document}